\setlist[description]{leftmargin=15pt,labelindent=15pt}
\definecolor{darkgreen}{rgb}{0,0.5,0}
\newtheorem{theorem}{Theorem}
\newtheorem{propos}[theorem]{Proposition}
\newtheorem{lemma}[theorem]{Lemma}
\newcommand{\Z}{\mathbb{Z}}
\newcommand{\R}{\mathbb{R}}
\newcommand{\rev}[1]{{#1}^{\REV}}
\DeclareMathOperator{\REV}{R}
\newcommand{\cD}{\mathcal{D}}
\newcommand{\cO}{\mathcal{O}}
\newcommand{\cS}{\mathcal{S}}
\title{Min-cost-flow preserving bijection\\ between subgraphs and orientations}
\author{Izhak Elmaleh\thanks{Hebrew University of Jerusalem Israel, \texttt{izhak.elmaleh@gmail.com}.} \and Ohad N. Feldheim\thanks{Hebrew University of Jerusalem Israel, \texttt{ohad.feldheim@mail.huji.ac.il}. Supported by ISF grant 1327/19.} }
\date{}
\begin{document}

\maketitle

\begin{abstract}
\vspace{0.1em}
Consider an undirected graph $G=(V,E)$.  A subgraph of $G$ is a subset of its edges, whilst an orientation of $G$ is an assignment of a direction to each edge. 
Provided with an integer circulation-demand $d:V\to \Z$, we show an explicit and efficiently computable bijection between subgraphs of $G$ on which a $d$-flow exists and orientations on which a $d$-flow exists. Moreover, given a cost function $w:E\to (0,\infty)$ we can find such a bijection which preserves the $w$-min-cost-flow.

In 2013, Kozma and Moran \cite{KM13} showed, using dimensional methods, that the number of subgraphs $k$-connecting a vertex $s$ to a vertex $t$ is the same as the number of orientations $k$-connecting $s$ to $t$. An application of our result is an efficient, bijective proof of this fact.
\end{abstract}

\textbf{Keywords:} Bijective proof, Graph orientations, Connectivity, VC dimension.

\section{Introduction}
Let $G=(V,E)$ be a simple graph with positive cost function $w:E\to (0,\infty)$. We regard 
$G$ as a digraph, treating every undirected edge as a pair of directed edges in reverse direction. 

Denote the set of \emph{subgraphs} of $G$ by $\cS(G)=\left\{K\subset E\ :\ \forall_{e\in E}|\{e,\rev{e}\}\cap K|\neq 1\right\}$, and the set of \emph{orientations} of $G$ by $\cO(G)=\left\{L\subset E\ :\ \forall_{e\in E}|\{e,\rev{e}\}\cap L|=1\right\}$, where $\rev{(u,v)}=(v,u)$.

A function $d:V\to \mathbb{Z}$ such that $\sum_{u\in V}d(u)=0$ is called an \emph{integer demand}. A $d$-flow on $G$ is function $f:E\to[0,1]$ satisfying such that for any $u\in V$ we have $\sum_{v\sim u}f((u,v))-f((v,u))=d(u)$. We say that $f$ is a flow on a directed subgraph $D\subset E$ if $f(e)=0$ for all $e\notin D$. Denote $\cS_f$, $\cO_f$, $\cD_f$ for the set of subgraphs, orientations and directed subgraphs of $G$ on which a $d$-flow exists. Given a cost function $w:E\to \R_+$ and a flow $f$, write $|f|_w=\sum_{e\in E}w(e)|f(e)|$ for the total $w$-cost of $f$. The $w$ min-cost-flow satisfying $d$ is the $d$-flow for which this cost is minimal. 

Our main result is the following.
\begin{theorem}\label{thm:main-tech}
For any graph $G$, integer demand $d$ and cost function $w$,
there exists an explicit bijection between $\cS_f$ and $\cO_f$, computable in polynomial time, which preserves a $w$ min-cost-flow.
\end{theorem}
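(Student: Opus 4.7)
The plan is to use the min-cost $d$-flow itself as the invariant underlying the bijection. Since $d$ is integral and capacities are unit, every min-cost $d$-flow is $\{0,1\}$-valued on directed edges, so its support $S\subset E$ is automatically ``oriented'': it contains no pair $\{e,\rev{e}\}$. After fixing a canonical tie-breaking for the min-cost flow (for instance by perturbing $w\mapsto w+\varepsilon w'$ with $w'$ generic, making $f^*(D)$ unique for every $D\in\cD_f$) I would partition $\cS_f=\bigsqcup_S\cS_f^S$ and $\cO_f=\bigsqcup_S\cO_f^S$ by the canonical support. Every element of $\cS_f^S\cup\cO_f^S$ admits exactly the same flow --- the unit flow along $S$ --- so it is enough to exhibit a bijection $\phi_S\colon\cS_f^S\to\cO_f^S$ in each class.

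Inside a class, LP duality gives the structure. Let $\Pi_S\subset\R^V$ be the affine space of potentials with $\pi(u)-\pi(v)=w(e)$ for every $(u,v)\in S$. Then $K\in\cS_f^S$ iff some $\pi\in\Pi_S$ satisfies $|\pi(u)-\pi(v)|\le w(e)$ on each non-$S$ undirected edge $\{u,v\}\subset K$, and $L\in\cO_f^S$ iff some $\pi\in\Pi_S$ satisfies the one-sided $\pi(u)-\pi(v)\le w(e)$ on each directed $(u,v)\in L\setminus S$. The hyperplanes $\pi(u)-\pi(v)=\pm w(e)$ for non-$S$ edges partition $\Pi_S$ into cells; on each cell every non-$S$ edge is either \emph{slack} (both orientations valid, edge free to be in or out of $K$) or \emph{forced} (one orientation mandatory, edge excluded from $K$). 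The feasible $K$'s and $L$'s over a fixed cell are therefore both parametrised by subsets of the slack edges, yielding matching cardinalities $2^{\#\mathrm{slack}}$.

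The bijection $\phi_S$ would then be built by computing a canonical witness potential $\pi_K$ in the feasibility polytope of $K$, and orienting every non-$S$ edge $\{u,v\}$ of $\phi_S(K)$ by the tight/slack status at $\pi_K$: forced edges take their only valid orientation, and slack edges get a reference orientation (read off the same perturbation $w'$) toggled by whether the edge lies in $K$. The inverse map recovers $K$ from $L$ symmetrically from the canonical witness potential of $L$. All steps are polynomial-time since min-cost flow and LP potentials are.

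The main obstacle is to make this rule genuinely preserve the min-cost flow. A naive global reference orientation fails: for the triangle $abc$ with $w(ab)=10$, $w(ac)=w(bc)=1$ and $d=(+1,-1,0)$, starting from $K=\{ab,bc\}$ a reference-based rule would produce $L=(a\to b,\, a\to c,\, c\to b)$, introducing the cheap shortcut $a\to c\to b$ and breaking min-cost optimality of the $a\to b$ flow. The potential $\pi_K$ is the right device because it is itself a dual certificate that no such shortcut exists through $K$'s own edges; toggling only on slack edges keeps the certificate valid in $\phi_S(K)$, preserving optimality. The remaining subtlety is canonicalising $\pi_K$ when the feasibility polytope is unbounded (as happens for the ``base'' element $\tilde{S}$ of each class) --- this is where the tie-breaking perturbation must be designed to act on the dual side as well, and is the most delicate part of making the construction truly explicit.
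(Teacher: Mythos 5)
Your reduction to classes with a fixed canonical support $S$ is sound, and the dual characterisation of membership in $\cS_f^S$ and $\cO_f^S$ via potentials is essentially right (modulo a technical debt: complementary slackness on a saturated unit-capacity arc gives only the inequality $\pi(u)-\pi(v)\ge w(e)$, not the equality you impose in defining $\Pi_S$, so you still owe an argument that a certificate tight on all of $S$ exists whenever any certificate does). The genuine gap is in the step that is supposed to produce the bijection itself. The per-cell statement --- that the $K$'s and the $L$'s compatible with a fixed open cell are each parametrised by subsets of that cell's slack edges --- does not yield $|\cS_f^S|=|\cO_f^S|$, because a given $K$ (or $L$) is typically feasible over many cells: already for a single edge with $d\equiv 0$, the empty subgraph is feasible in all three cells of the arrangement, so summing $2^{\#\mathrm{slack}}$ over cells counts with multiplicity ($2+1+1=4$, while $|\cS_f|=2$). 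To extract a bijection you must assign to each $K$ one canonical cell (via $\pi_K$) and to each $L$ one canonical cell (via $\pi_L$), and then prove the compatibility statement that makes the map invertible: for $L=\phi_S(K)$, the canonical witness of $L$ is computed from a \emph{different} polytope $Q_L$ (one-sided constraints on all of $L\setminus S$) than the polytope $P_K$ used to define $\phi_S(K)$ (two-sided constraints on $K\setminus S$ only), and nothing in the proposal forces these two witnesses into the same cell. If they drift apart, the slack/forced classifications disagree and the ``symmetric'' inverse does not recover $K$. You flag this canonicalisation as ``the most delicate part'', but it is not a finishing touch: it is the entire content of the theorem, and it is exactly the point at which naive rules fail (your own triangle example is an instance of two inconsistent classifications).

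For contrast, the paper's proof avoids duality entirely. It proves an elementary exchange lemma --- for every $D\in\cD_f$ and every edge $e$, at least one of $A(D\oplus e)=A(D)$ or $A(D\oplus\rev{e})=A(D)$ holds, where $A(D)$ is the min-cost flow --- and then processes the edges one at a time in a fixed order: when only one orientation of $e_i$ preserves $A(D)$ that orientation is forced, and when both do, the free bit encodes whether $e_i$ belonged to the subgraph, via a fixed reference orientation. Invertibility then reduces to a one-edge case analysis, and the consistency you are missing is supplied automatically by the shared edge order. If you want to rescue the dual approach, you would need a cell-selection rule provably common to $P_K$ and $Q_{\phi_S(K)}$ --- for instance a lexicographic, edge-by-edge tightening --- and at that point you have essentially rebuilt the paper's sequential argument inside the dual.
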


We call a path from $s$ to $t$ in $G$ an $(s,t)$-path. A directed graph is said to \emph{$k$-connect $s$ and $t$} if there exist $k$ disjoint $(s,t)$-directed paths. Denote $\cS_k$ and $\cO_k$ the sets of subgraphs and orientations of $G$ which $k$-connect $s$ and $t$, respectively.

A collection of $k$ disjoint $(s,t)$-directed paths is called \emph{minimal} in a directed graph $D$ if the total weight of edges participating in the paths is minimal. Recalling the classical Integrality Theorem, which guarantees that any min-cost-flow problem in a graph (i.e. with capacity $1$ for each edge) has an integer optimal solution, 
Theorem~\ref{thm:main-tech}
implies the following.

\begin{theorem}\label{thm:main}
For any weighted graph $G=(V,E,w)$ There exists an explicit bijection between $\cS_k$ and $\cO_k$, computable in polynomial time, which preserves the minimal $k$ collection of $w$-shortest paths.
\end{theorem}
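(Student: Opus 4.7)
The plan is to derive Theorem~\ref{thm:main} from Theorem~\ref{thm:main-tech} by encoding $k$-connectivity from $s$ to $t$ as the existence of a specific $d$-flow. First I would set the integer demand $d: V \to \Z$ by $d(s) := k$, $d(t) := -k$, and $d(v) := 0$ for every other vertex; since $\sum_v d(v) = 0$, this is a valid demand. By the max-flow min-cut theorem together with the Integrality Theorem --- equivalently, Menger's theorem for directed graphs --- a digraph $D$ admits an integer $(s,t)$-flow of value $k$, i.e.\ a $d$-flow, if and only if $D$ contains $k$ edge-disjoint directed $(s,t)$-paths. Applied to orientations this immediately yields $\cO_k = \cO_f$. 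For a subgraph $K \in \cS(G)$, which contains both antiparallel copies of each of its undirected edges, I would additionally note that any $d$-flow sending positive flow in both directions of the same edge can be cancelled without affecting conservation, so the doubled digraph of $K$ admits $k$ edge-disjoint directed $(s,t)$-paths exactly when $K$ does in the undirected sense; hence $\cS_k = \cS_f$.

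With these identifications, I would apply Theorem~\ref{thm:main-tech} to $G$, $d$, and $w$ to obtain an explicit, polynomial-time computable bijection $\Phi: \cS_k \to \cO_k$ preserving a $w$ min-cost $d$-flow. It remains to translate ``a $w$ min-cost $d$-flow'' into ``a minimal collection of $k$ $w$-shortest $(s,t)$-paths''. Invoking the Integrality Theorem once more produces an integer optimal $d$-flow $f^\star$ taking values in $\{0,1\}$. Because $w$ is strictly positive, $f^\star$ cannot carry any directed cycle (deleting the cycle would strictly reduce $|f^\star|_w$ while preserving conservation) and, by the cancellation argument above, it cannot use both copies of an undirected edge simultaneously. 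A standard flow decomposition then writes $f^\star$ as the edge-disjoint union of exactly $k$ directed $(s,t)$-paths whose total $w$-weight equals $|f^\star|_w$, and is therefore minimal over all collections of $k$ disjoint $(s,t)$-paths.

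The main obstacle I anticipate is the subgraph case: the notion ``$d$-flow on $K$'' permits flow in both antiparallel copies of an undirected edge, whereas $\cS_k$ is phrased in terms of undirected edge-disjoint paths. Showing that these notions coincide, and that the preserved min-cost flow decomposes cleanly into $k$ disjoint paths rather than into a configuration involving fractional values or superfluous cycles, relies crucially on the hypothesis $w > 0$; extending the statement to merely non-negative weights would require separately pruning zero-cost cycles, a minor but nontrivial adjustment. Once these verifications are in place, Theorem~\ref{thm:main} is an immediate corollary of Theorem~\ref{thm:main-tech}.
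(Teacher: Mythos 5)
Your proposal is correct and follows essentially the same route as the paper, which derives Theorem~\ref{thm:main} from Theorem~\ref{thm:main-tech} via the demand $d(s)=k$, $d(t)=-k$, $d\equiv 0$ elsewhere, together with the Integrality Theorem and flow decomposition. The paper states this reduction in a single sentence; your write-up simply supplies the details (the identifications $\cO_k=\cO_f$, $\cS_k=\cS_f$ and the cancellation of antiparallel flow using $w>0$) that the paper leaves implicit.
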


The result could be easily generalised to vertex disjoint paths by introducing vertex capacities. 

\section{Background and motivation}

The study of the relationship between subgraphs and orientation is a classical subject in combintorics. In 1960 Nash-Willims \cite{nash1960orientations}, generalizing a 1939 result by Robbins \cite{robbins1939theorem}, showed that every undirected graph $G$ has a well-balanced orientation. Chvátal and Thomassen \cite{chvatal1978distances} proved that every undirected bridgeless (i.e. 2-connected) graph of radius $r$ admits an orientation of radius at most $r^{2r}+r$, and that this bound is best possible. In \cite{Bernardi08} Bernardi showed that evaluation of the Tutte polynomial counts both the spanning subgraphs and the orientations of $G$. 

In 2013, Kozma and Moran \cite{KM13}, introduced Vapnik–Chervonenkis (VC) theory to the subject. They showed that there are several properties $\phi$ of graphs, for which number of subgraphs of a given graph $G$ which satisfy $\phi$ is either the same, or dominates the number of orientations satisfying it. Their proof relies upon shattering extremal systems, using the sandwich theorem \cite{paj85}. Recently Bucić, Janzer and Sudakov \cite{BJS20} used this method to count $H$-free orientations of a given graph G.

In particular, it was shown in \cite{KM13} that $|\cS_k|=|\cO_k|$. Their method, however, is non-constructive, and its naïve algorithmic application is of exponential complexity in $|E|$. In Theorem~\ref{thm:main} we obtain an explicit, natural and efficiently computable bijection between $\cS_k$ and $\cO_k$, which preserves a particular collection of $k$-disjoint paths. 


\subsection{Notation and conventions}

Throughout $G=(V,E)$, the base graph, $w:E\to \R_+$, the weight function, $d:V\to \Z$, the demand function, $E'\in \cO(E)$ an arbitrary orientation of $G$ and an \emph{a priori}
order $e_1,\dots, e_{|E|}$ on the edges of $E$ are all fixed. 
We also define \[\chi(e)=\begin{cases}
e & \text{for }e\in E',\\
\rev{e} & \text{for } \rev{e} \in E'.
\end{cases}\]

For simplicity, but without loss of generality, we assume that $G$ has no two distinct paths of equal length.

We denote the solution to the min-cost-flow problem in the directed subgraph $D\subset E$ with respect to $d$ by $A(D)$, whenever such a solution exists. Using the Integrality Theorem, we treat $A(D)$ both as a set of directed edges and as a flow.

To simplify addition and subtraction of edges from a directed subgraph we employ the orientation operation $D\oplus e:=\{D\cup\{e\}\}\setminus\rev{e}$, the symmetric inclusion operation, $E+e:=E\cup\{e, \rev{e}\}$ and the symmetric exclusion operation $E-e:=E\setminus\{e, \rev{e}\}$.

\section{The bijection}

Our bijection relies on the following lemma.

\begin{lemma}\label{lem:properbij}
Let $D\in \cD_f$ and $e\in G$. Then at least one of the following holds:
\begin{itemize}
    \item $A(D\oplus e)=A(D)$,
    \item $A(D\oplus \rev{e})=A(D)$.
\end{itemize}
\end{lemma}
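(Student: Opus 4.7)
The plan is to invoke the classical residual-graph characterization of integer min-cost flows: a feasible integer $d$-flow $f$ on a directed subgraph $D'\subseteq E$ has minimum $w$-cost if and only if the residual graph $R_{D',f}$ contains no directed cycle of negative total cost. Recall that $R_{D',f}$ is constructed by including, for every $a\in D'$ with $f(a)=0$, a forward copy of $a$ of cost $w(a)$, and, for every $a\in D'$ with $f(a)=1$, a reversed copy $\rev{a}$ of cost $-w(a)$. By the Integrality Theorem I may regard $f:=A(D)$ as an integer flow; note also that $f$ never uses both $e$ and $\rev{e}$ simultaneously, for otherwise zeroing both entries yields a strictly cheaper feasible flow, contradicting the minimality of $f$ on $D$.

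Suppose first that $f(e)=1$. Then $f$ avoids $\rev{e}$, so $f\subseteq D\setminus\{\rev{e}\}\subseteq D\oplus e$ remains a feasible flow on $D\oplus e$. The only change from $R_{D,f}$ to $R_{D\oplus e,f}$ is the possible removal of a forward residual edge at $\rev{e}$ of cost $w(e)>0$; no edges are added. Hence $R_{D\oplus e,f}\subseteq R_{D,f}$ inherits the absence of negative cycles, so $f$ is still a min-cost flow, and $A(D\oplus e)=A(D)$. The case $f(\rev{e})=1$ is symmetric, yielding $A(D\oplus\rev{e})=A(D)$.

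It remains to treat the case $f(e)=f(\rev{e})=0$. Whenever at least one of $e,\rev{e}$ lies in $D$, passing from $D$ to $D\oplus e$ or to $D\oplus\rev{e}$ either leaves $D$ unchanged or removes a single unused edge, so the preceding shrinking-residual argument applies verbatim. The genuinely new subcase is $e,\rev{e}\notin D$, in which $D\oplus e=D\cup\{e\}$ and $D\oplus\rev{e}=D\cup\{\rev{e}\}$; forming $R_{D\oplus e,f}$ now adds to $R_{D,f}$ a single forward copy of $e$ of cost $w(e)$, and symmetrically for $\rev{e}$. Any negative cycle in $R_{D\oplus e,f}$ must therefore use this newly-added edge, which forces the existence of a directed walk in $R_{D,f}$ from the head of $e$ to the tail of $e$ of cost strictly less than $-w(e)$. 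Likewise, a negative cycle in $R_{D\oplus\rev{e},f}$ would yield a directed walk in $R_{D,f}$ from the tail of $e$ to the head of $e$ of cost strictly less than $-w(e)$.

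The key step, which I expect to be the main obstacle, is to observe that these two walks cannot coexist: their concatenation would be a closed directed walk in $R_{D,f}$ of cost strictly less than $-2w(e)<0$, and any such closed walk decomposes into directed cycles whose costs sum to a negative value, forcing at least one of them to be a negative cycle in $R_{D,f}$ and contradicting the minimality of $f=A(D)$ on $D$. Hence at least one of $R_{D\oplus e,f}$ and $R_{D\oplus\rev{e},f}$ is free of negative cycles, and the corresponding equality $A(D\oplus e)=A(D)$ or $A(D\oplus\rev{e})=A(D)$ follows. The remainder of the work is just bookkeeping of which of $e,\rev{e}$ belong to $D$; it is only this asymmetric, ``residual-graph grows'' subcase that genuinely requires playing the two hypothetical negative cycles against each other.
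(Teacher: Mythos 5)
Your proof is correct, but it takes a genuinely different route from the paper. The paper argues via convexity: assuming both $A(D\oplus e)$ and $A(D\oplus\rev{e})$ beat $A(D)$, integrality forces one improved flow to use $e$ and the other to use $\rev{e}$, and the average $\tfrac{F_1+F_2}{2}$ then has cancelling flow on the pair $\{e,\rev{e}\}$, yielding a feasible flow on $D$ cheaper than $A(D)$ --- a contradiction. You instead invoke the negative-cycle optimality criterion for the residual network and play the two hypothetical negative cycles (one through the forward copy of $e$, one through the forward copy of $\rev{e}$) against each other: their concatenation is a negative closed walk already present in $R_{D,f}$, which decomposes into cycles and contradicts the optimality of $A(D)$ on $D$. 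These are really two presentations of the same cancellation phenomenon (the difference $F_i-F_0$ of the paper's argument is precisely a negative residual circulation), but your version imports more machinery (residual graphs, cycle decomposition of closed walks) in exchange for a more algorithmic picture, while the paper's averaging argument is shorter and self-contained given the Integrality Theorem. A side benefit of your write-up is that you treat the cases where $e$ or $\rev{e}$ already lies in $D$ explicitly via the ``residual graph only shrinks'' observation, whereas the paper dismisses them as straightforward; conversely, your phrasing that these cases only ever ``remove a single unused edge'' is slightly loose (e.g.\ for $e\in D$, $\rev{e}\notin D$ the operation $D\oplus\rev{e}$ both removes $e$ and adds $\rev{e}$), though it is harmless since the other member of the pair, $D\oplus e=D$, is the one your argument actually uses. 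Both proofs share the same implicit reliance on uniqueness of the min-cost flow to pass from ``$A(D)$ is still optimal on $D\oplus e$'' to ``$A(D\oplus e)=A(D)$''.
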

\begin{proof}
If either $e\in D$, $\rev{e}\in D$ or both, the lemma is straightforward, as $A(D)$ cannot include both $e$ and $\rev{e}$.
We may therefore assume that $\{e,\rev{e}\}\cap D=\emptyset$.

The classical Integrality Theorem, guarantees that any min-cost-flow problem in a graph (i.e. with capacity $1$ for each edge) has an integer optimal solution. Write $F_0$ for the min-cost $d$-flow in $D$, $F_1$ for 
 the min-cost $d$-flow in $D\cup \{e\}$, and  $F_2$  for  the min-cost $d$-flow  in $D\cup \{\rev{e}\}$. 

Assume for the sake of obtaining a contradiction that these three flows are distinct, so that, by monotonicity, $|F_1|_w,|F_2|_w<|F_0|_w$. In particular, this implies, by the Integrality Theorem, that $F_1$ assigns flow $1$ to $e$ and $F$ assigns flow $1$ to $\rev{e}$ or vice versa, as otherwise one of these flows would exist also as a flow on $D$. This implies, however, that $\frac{F_1+F_2}{2}$, which is also a $d$-flow, assigns a total of $0$ flow to $e$, so that it is a proper flow on $D$. Clearly, the total weight of this flow is less than the maximum among $F_1$ and $F_2$, a contradiction to the minimality of $F_0$. We deduce that either $F_0=F_1$ or $F_0=F_2$.
\end{proof}
\pagebreak
We also require the following observation.
\begin{lemma}\label{lem:one direct}
Let $D\in \cD_f$ and $e\in A(D)$, then 
$A(D)=A(D+e)$.
\end{lemma}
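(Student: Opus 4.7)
The plan is to show that the min-cost $d$-flow on the larger subgraph $D+e$ is realised already by $A(D)$. Since $D\subseteq D+e$, every $d$-flow on $D$ lifts trivially to a $d$-flow on $D+e$, giving $|A(D+e)|_w\le|A(D)|_w$ immediately. Combined with uniqueness of the min-cost flow (guaranteed by the standing assumption that no two paths of $G$ have equal length), it suffices to rule out a strict inequality.

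Suppose for contradiction that some $d$-flow $F_1$ on $D+e$ satisfies $|F_1|_w<|A(D)|_w$; by the Integrality Theorem we may take $F_1$ to be integer valued. Since $e\in A(D)$ we have $A(D)(e)=1$ and $A(D)(\rev{e})=0$. The flow $F_1$ must use $\rev{e}$, for otherwise it is supported on $D$ and already contradicts the minimality of $A(D)$ there; by integrality, $F_1(\rev{e})=1$.

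I would now follow the averaging idea of Lemma~\ref{lem:properbij}. Set $g:=\tfrac{1}{2}(A(D)+F_1)$, a $d$-flow of cost $\tfrac{1}{2}(|A(D)|_w+|F_1|_w)<|A(D)|_w$. Here $g(\rev{e})=\tfrac{1}{2}$ and $g(e)=\tfrac{1}{2}(1+F_1(e))\ge\tfrac{1}{2}$. Subtracting $\tfrac{1}{2}$ from both $g(e)$ and $g(\rev{e})$ removes the same mass from the inflow and the outflow at each endpoint of $e$, hence preserves Kirchhoff's law; the result is a non-negative $d$-flow $g'$ supported on $D$, of cost strictly less than $|g|_w$, and therefore strictly less than $|A(D)|_w$. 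This contradicts the minimality of $A(D)$ on $D$ and completes the argument.

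The main obstacle I anticipate is bookkeeping the non-negativity and flow-conservation across the cancellation step: the crucial inequality $g(e)\ge\tfrac{1}{2}$ uses only $A(D)(e)=1$ and $F_1\ge 0$, and $g'$ vanishes on $\rev{e}$ precisely because we cancel exactly the $\tfrac{1}{2}$ contribution there, which is what places $g'$ inside the set of $d$-flows on $D$. Once this is in place, uniqueness of the min-cost flow upgrades the equality of costs $|A(D+e)|_w=|A(D)|_w$ to the equality of flows asserted by the lemma.
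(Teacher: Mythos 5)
Your proof is correct and takes essentially the same route as the paper's: reduce to ruling out a strictly cheaper flow on $D+e$, note that such a flow must send one unit through $\rev{e}$ while $A(D)$ sends one through $e$, and average the two to cancel on $\{e,\rev{e}\}$ and obtain a cheaper $d$-flow supported on $D$, contradicting the minimality of $A(D)$. Your explicit handling of the cancellation step and the final appeal to uniqueness of the min-cost flow just spell out details the paper leaves implicit.
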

\begin{proof}
Assume to the contrary that $A(D+e)\neq A(D)=A(D\oplus e)$ and denote by $F_0$ the flow corresponding to $A(D)$ and $F_1$ for the flow corresponding to $A(D+e)$, as in the proof of lemma~\ref{lem:properbij}. By monotonicity, we have $|F_1|_w<|F_0|_w$. By minimality this implies that $F_1(\rev{e})=1$ whilst by our assumption 
$F_0(e)=1$. Hence $\frac{F_0+F_1}2$ is a flow on $D$ satisfying $|\frac{F_0+F_1}2|<|F_1|$, a contradiction. Therefore $A(D)=A(D+e)$.
\end{proof}

Equipped with Lemma~\ref{lem:properbij}, the orientation $E'$ and our order $(e_1,\dots, e_{|E|})$, we are ready to present our bijection in the next couple of sections.

\subsection{The $\cS_f\to\cO_f$ bijection}

The bijection $\phi:\cS_f\to\cO_f$ is iteratively obtained  by firstly orienting $e_1$, then $e_2$ and so forth.
This is done by applying a sequence of maps $\phi_i:\cD_f\to\cD_f$
for $i\in\{1,\dots, |E|\}$, such that for all $i\le |E|$ we have $\phi_i(D)\setminus\{e_i,\rev{e}_i\}=D\setminus\{e_i,\rev{e}_i\}$ and $A(D)=A(\phi_i(D))$. Define $\phi_i(D)$ as follows.

Firstly, we make sure that the orientation of $e_i$ will not alter the min-cost $d$-flow. 
\begin{enumerate}
    \item if $A(D)\neq A\big(D\oplus e_i\big)$ we set $\phi_i(D):=D\oplus \rev{e}_i$,
    \item if $A(D)\neq A\big(D\oplus \rev{e}_i\big)$ we set $\phi_i(D):=D\oplus
     e_i$.
\end{enumerate}
In the remaining case, where $A(D)= A(D\oplus \{e_i\})=A\big(D\oplus \{\rev{e}_i\}\big)$ we do the following:
\begin{enumerate}
    \item[3.] if $e_i\in D$ we set $\phi_i(D):=D\oplus \chi(e_i)$,
    \item[4.] if $e_i\notin D$ we set $\phi_i(D):=D\oplus \rev{\chi(e_i)}$.
\end{enumerate}
Observe that, by Lemma~\ref{lem:properbij}, rules (1.) and (2.) are mutually exclusive so that $\phi_i$ is well defined, $A(D)=A(\phi_i(D))$ and $|\phi_i(D)\cap \{e_i,\rev{e}_i\}|=1$.

We then set $\phi(K):=\phi_{|E|} \circ \phi_{|E-1|} \circ \cdots \circ \phi_1(K)$ so that $\phi$ maps $S_f$ to $O_f$ and $A(K)=A(\phi(K))$.

\subsection{The $\cO_f\to\cS_f$ bijection}
The reverse bijection $\psi:\cO_f\to\cS_f$ is obtained similarly. This time we iterate by first deciding whether to include both $e_{|E|}$ \& $\rev{e}_{|E|}$ or neither of them, then $e_{|E|-1}$ \& $\rev{e}_{|E|-1}$ and so forth. This is done by applying a sequence of maps $\psi_i:\cD_f\to \cD_f$ for $i\in \{1,\dots, |E|\}$, such that for all $i\le |E|$ we have 
$\psi_i(D)\setminus\{e_i,\rev{e}_i\}=D\setminus\{e_i,\rev{e}_i\}$.

Firstly, we verify that the decision to include or exclude $e_i$ \& $\rev{e}_i$ will not alter the min-cost $d$-flow.
\begin{enumerate}
    \item if $A(D)\neq A\big(D+ e_i\big)$ we set $\psi_{i}(D):=D-e_i$,
    \item if $A(D)\neq A\big(D- e_i\big)$ we set $\psi_{i}(D):=D+e_i$.
\end{enumerate}
In the remaining case, where $A(D)= A(D+ e_i)=A(D-e_i)$, we do the following:
\begin{enumerate}
    \item[3.] if $\chi(e_i)\in D$ we set $\psi_{i}(D):=D+e_i$,
    \item[4.] if $\chi(e_i)\notin D$ we set $\psi_{i}(D):=D-e_i$.
\end{enumerate}
Observe that, by Lemma~\ref{lem:one direct} rules (1.) and (2.) are mutually exclusive, as the former's condition is impossible if $\{e_i,\rev{e}_i\}\cap A(D)\neq \emptyset$ and the latter's is impossible otherwise. Hence that $\psi_i$ is well defined, $A(D)=A(\psi_i(D))$ and $|\psi_i(D)\cap \{e_i,\rev{e}_i\}|\neq 1$.

We then set $\psi(L):=\psi_{1} \circ \psi_{2} \circ \cdots \circ \psi_{|E|}(L)$ so that $\psi$ maps $O_f$ to $S_f$ and $A(L)=A(\psi(L))$.

\section{Proof of bijectivity}
In this section we establish the fact that $\psi$ is the inverse function of $\phi$ and, as a consequence, Theorem~\ref{thm:main-tech}.

This is an immediate consequence of the following

\begin{propos}\label{prop:tech}
For all $D\in \cD_f$ it holds that 
\begin{itemize}
    \item if $|\psi_i(D)\cap \{e_i,\rev{e}_i\}|\neq 1$ then $D=\psi_i\circ \phi_i(D)$
    \item if $|\psi_i(D)\cap \{e_i,\rev{e}_i\}|= 1$ then $D=\phi_i\circ \psi_i(D)$
\end{itemize}
\end{propos}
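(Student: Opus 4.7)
I interpret both conditions as applying to $D$ itself---$|D\cap\{e_i,\rev{e}_i\}|\neq 1$ (``subgraph form'') for the first bullet and $=1$ (``oriented form'') for the second, since $\psi_i$ always outputs a set of the first type. The proposition then encodes the local inversion property needed to conclude $\psi=\phi^{-1}$: on a subgraph-form $D$, $\phi_i$ introduces an orientation which $\psi_i$ must strip back; on an oriented-form $D$, $\psi_i$ strips an orientation which $\phi_i$ must reintroduce identically.

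My plan is a case analysis on which of the four defining rules $\phi_i$ (or $\psi_i$) invokes, leveraging three recurring facts: $A(\phi_i(D))=A(\psi_i(D))=A(D)$ holds by construction; a min-cost integral flow never uses both $e_i$ and $\rev{e}_i$; and the interaction between the $\oplus$ and $\pm$ operations is determined by which of $e_i,\rev{e}_i$ lie in $D$. For the first bullet, set $D':=\phi_i(D)$. If $\phi_i$ invokes rule~1, I would split on whether $D$ contains both of $e_i,\rev{e}_i$ or neither, and in each subcase argue that exactly one of $A(D'+e_i),A(D'-e_i)$ strictly drops below $A(D')$ while the other equals it, triggering the matching rule~1 or~2 of $\psi_i$ and returning $D$. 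Rule~2 of $\phi_i$ is symmetric, and the second bullet is entirely dual, with Lemma~\ref{lem:one direct} used to rule out simultaneous applicability of rules~1 and~2 of $\psi_i$.

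The main obstacle I expect is the tie-breaker cases (rules~3--4 of $\phi_i$), where the relevant $A$-values may all coincide and the orientation choice is made purely via $\chi$. Here I would trace through the bookkeeping carefully: $\phi_i$ rule~3 (applied when $e_i\in D$, hence also $\rev{e}_i\in D$ by the subgraph-form assumption) puts $\chi(e_i)$ into $D'$, while $\phi_i$ rule~4 (applied when $e_i\notin D$, hence $\rev{e}_i\notin D$) puts $\rev{\chi(e_i)}$ into $D'$. In the former subcase, either rule~2 of $\psi_i$ fires (returning $D'+e_i=D$) or, if not, $\chi(e_i)\in D'$ triggers rule~3 of $\psi_i$, again returning $D'+e_i=D$; symmetrically in the latter subcase one concludes with $D'-e_i=D$ via rule~1 or rule~4 of $\psi_i$. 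The dual verification for the second bullet is structurally identical, so the proposition follows.
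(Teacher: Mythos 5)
Your plan is correct and follows essentially the same route as the paper: a case analysis over which rule of $\phi_i$ (resp.\ $\psi_i$) fires, combined with $A(\phi_i(D))=A(D)$ and Lemmas~\ref{lem:properbij} and~\ref{lem:one direct} to show that the matching rule of the other map undoes the operation; the paper merely factors out as a preliminary equivalence (its display~\eqref{eq:equiv1}) the fact that the tie-breaker condition for $\oplus$ holds iff it holds for $+/-$, so that all four maps fall into the same rule class, whereas you verify the rule-matching case by case. The only spots to tighten are (a) in the tie-breaker case you must also observe that rule~1 of $\psi_i$ cannot fire on $D'=\phi_i(D)$ (immediate, since $D'+e_i=D$ and $A(D')=A(D)$), not just rule~2, and (b) the phrase ``strictly drops below'' should read ``differs from'' (deleting edges can only increase the min cost) --- neither affects the substance, and your reading of the hypotheses as conditions on $D$ itself is indeed the intended one.
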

\begin{proof}
Firstly, we show three statements, 
\begin{equation}\label{eq:equiv1}
\begin{gathered}
\text{either $A(D)= A(D\oplus  e_i)$ or $A(D)=A(D\oplus \rev{e}_i)$},\\
\text{either $A(D)= A(D+ e_i)$ or $A(D)=A(D-e_i)$},\\
\text{$A(D)= A(D+ e_i)=A(D-e_i)$ if and only if 
$A(D)= A(D\oplus e_i)=A(D\oplus \rev{e}_i)$.}
\end{gathered}
\end{equation}
The first two observations are immediate from Lemma~\ref{lem:properbij} and Lemma~\ref{lem:one direct}, respectively. 
To see the last equivalence, observe that the same two lemmata imply that the statements $A(D+ e_i)\neq A(D-e_i)$ and $A(D\oplus e_i)\neq A(D \oplus \rev{e}_i)$ are both equivalent to the fact that $e_i\in A(D+ e_i)$ or $\rev{e}_i\in A(D+e_i)$.

Using \eqref{eq:equiv1} we deduce that $\psi_i(D)$,$\phi_i(D)$,$\psi_i\circ \phi_i(D)$ and $\phi_i\circ \psi_i(D)$, are either all produced by rules (1.) and (2.) of their respective definition, or all produce by rules (3.) and (4.).

If they are all produced by rules (1.) and (2.), then we consider three cases. 
\begin{itemize}
    \item $\mathbf{\{e_i,\rev{e}_i\}\subset D}$\textbf{.} In this case 
    $A(D- e_i)\neq A(D)$ so that either $e_i\in A(D)$ or $\rev{e}_i\in A(D)$. Assume without loss of generality $e_i\in A(D)=A(\phi(D))$, so that, by rule (1.) of the definition of $\phi_i$ we have $\phi_i(D)=\phi_i(D)\oplus e_i$ and we obtain $\psi_i \circ \phi_i(D)=(\phi_i(D)\oplus e_i)+e_i=D$, by rule (2.) of the definition of $\psi_i$.
    \item $\mathbf{\{e_i,\rev{e}_i\}\cap D=\emptyset}$\textbf{.} In this case
    $A(D+ e_i)\neq A(D)$ so that either $e_i\in A(D+e_i)$ or $\rev{e}_i\in A(D+e_i)$. Assume without loss of generality $e_i\in A(D+e_i)\neq A(\phi(D))$, so that, by rule $(1.)$ of the definition of $\phi_i$ we have $\phi_i(D)=\phi_i(D)\oplus \rev{e}_i$ and we obtain $\psi_i \circ \phi_i(D)=(\phi_i(D)\oplus e_i)+e_i=D$, by rule (1.) of the definition of $\psi_i$.
    \item $\mathbf{|\{e_i,\rev{e}_i\}\cap D|=1}$\textbf{.} In this case, assume without loss of generality $e_i\in D$. \\If $A(D + e_i)\neq A(D)$ then $\rev{e}_i\in A(D+e_i)$ so that $A(D+e_i)\neq A(D)$. Thus by rule (1.) of the definition of $\psi$, we have $\psi(D)=D-e_i$ and, by rule (2.) of the definition of $\phi_i$, we obtain $\phi_i \circ \psi_i(D)=(\phi_i(D)- e_i)\oplus e_i=D$.
    \\If, on the other hand, $A(D- e_i)\neq A(D)$ then $e_i\in A(D)$ so that $A(D- e_i)\neq A(D)$. Thus by rule (2.) of the definition of $\psi$, we have $\psi(D)=D+e_i$ and, by rule (2.) of the definition of $\phi_i$, we obtain $\phi_i \circ \psi_i(D)=(\phi_i(D)+ e_i)\oplus e_i=D$.
\end{itemize}
If 
$\psi_i(D)$,$\phi_i(D)$,$\psi_i\circ \phi_i(D)$ and $\phi_i\circ \psi_i(D)$  are all produced by rules (3.) and (4.), then 
\begin{itemize}
    \item If $\{e_i,\rev{e}_i\}\subset D$ then $\phi(D)=D\oplus \chi(e_i)$ and $\psi_i\circ \phi_i(D)= (D\oplus \chi(e_i))+e_i=D$.
    \item If $\{e_i,\rev{e}_i\}\cap D=\emptyset$ then
    $\phi(D)=D\oplus \rev{\chi(e_i)}$ and $\psi_i\circ \phi_i(D)= (D\oplus \rev{\chi(e_i)})-e_i=D$.
    \item If $\{e_i,\rev{e}_i\}\cap D=\{\chi(e_i)\}$ then
    $\psi(D)=D+e_i$ and $\phi_i\circ \psi_i(D)= (D+e_i)\oplus \chi(e_i)=D$.
    \item If $\{e_i,\rev{e}_i\}\cap D=\{\rev{\chi(e_i)}\}$ then
    $\psi(D)=D-e_i$ and $\phi_i\circ \psi_i(D)= (D-e_i)\oplus \rev{\chi(e_i)}=D$.
\end{itemize}
\end{proof}

\section{Complexity}
As for Theorem~\ref{thm:main}, finding the minimal $k$ disjoint $(s,t)$-directed path could be done efficiently using the Suurballe algorithm \cite{suurballe84}, an extension of the Dijxtra algorithm \cite{dijkstra59}. The worst case complexity of this algorithm is $O(k|E|+k|V|\log |V|)$. The general case of Theorem~\ref{thm:main-tech}, has the complexity of solving the min-cost-flow problem, i.e., $O(|E||V|^2\log |V|)$.
\section*{Acknowledgment}
We wish to thank Yuval Peled for his suggestions concerning the presentation of the paper and for useful discussions.


\begin{thebibliography}{10}
\bibitem{Bernardi08}
O. Bernardi. \emph{Tutte polynomial, subgraphs, orientations and sandpile model: new connections via embeddings}, the Electronic Journal of Combinatorics 15, no. R109 (2008).

\bibitem{chvatal1978distances}
  
  V. Chv{\'a}tal and C. Thomassen. \emph{Distances in orientations of graphs},
  Journal of Combinatorial Theory, Series B 24, no. 1 (1978),
  pp. 61--75.

\bibitem{BJS20} M. Bucić, O. Janzer, and B. Sudakov. \emph{Counting $H$-free orientations of graphs}, arXiv preprint arXiv:2106.08845 (2021).

\bibitem{dijkstra59} E. Dijkstra. \emph{A note on two problems in connexion with graphs}, Numerische mathematik 1 no. 1 (1959), pp. 269--271.

\bibitem{KM13}
L. Kozma, S. Moran. \emph{Shattering, Graph Orientations, and Connectivity}, The Electronic Journal of Combinatorics 20, no. 3 (2013), P40, https://doi.org/10.37236/3326.

\bibitem{Mor12} S. Moran.
\emph{Shattering Extremal Systems},
Master’s thesis, Saarland University,
Saarbrücken, Germany, 2012.

\bibitem{nash1960orientations}
  C. St J. Nash-Williams. \emph{On orientations, connectivity and odd-vertex-pairings in finite graphs},
  Canadian Journal of Mathematics 12 (1960),
  pp. 555--567.
  
\bibitem{paj85}
A. Pajor. \emph{Sous-espaces $l^n_1$ des espaces de banach}.
Travaux en Cours. Hermann, Paris, 1985.

\bibitem{robbins1939theorem}
H. E. Robbins, \emph{A theorem on graphs, with an application to a problem of traffic control}, The American Mathematical Monthly 46, no. 5 (1939), pp. 281--283.

\bibitem{suurballe84} J. Suurballe and E. Tarjan, \emph{A quick method for finding shortest pairs of disjoint paths}, Networks 
14 no. 2 (1984), pp. 325--336.
\end{thebibliography}
\end{document}